\titleformat{\chapter}[display]
{\normalfont\huge\bfseries}{\chaptertitlename\\thechapter}{20pt}{\Huge}
\titleformat{\subsubsection}[runin]
{\normalfont\normalsize\bfseries}{\thesubsubsection}{1em}{}
\titleformat{\paragraph}[runin]
{\normalfont\normalsize\bfseries}{\theparagraph}{1em}{}
\titleformat{\subparagraph}[runin]
{\normalfont\normalsize\bfseries}{\thesubparagraph}{1em}{}
\titlespacing*{\chapter} {0pt}{50pt}{40pt}
\titlespacing*{\section} {0pt}{3.5ex plus 1ex minus .2ex}{2.3ex plus .2ex}
\titlespacing*{\subsection} {0pt}{3.25ex plus 1ex minus .2ex}{1.5ex plus .2ex}
\titlespacing*{\subsubsection}{0pt}{3.25ex plus 1ex minus .2ex}{1.5ex plus .2ex}
\titlespacing*{\paragraph} {0pt}{3.25ex plus 1ex minus .2ex}{1em}
\titlespacing*{\subparagraph} {\parindent}{3.25ex plus 1ex minus .2ex}{1em}
\subjclass[2010]{Primary 16W20, 14R15; Secondary 16S32}
\newtheorem{theorem}{Theorem}[section]
\newtheorem{lemma}[theorem]{Lemma}
\newtheorem{proposition}[theorem]{Proposition}
\newtheorem{corollary}[theorem]{Corollary}
\theoremstyle{definition}
\newtheorem{definition}[theorem]{Definition}
\newtheorem{notations}[theorem]{Notations}
\newtheorem{example}[theorem]{Example}
\theoremstyle{remark}
\newtheorem{remark}[theorem]{Remark}
\DeclareMathOperator{\Dir}{Dir}
\DeclareMathOperator{\Supp}{Supp}
\DeclareMathOperator{\en}{en}
\DeclareMathOperator{\st}{st}
\DeclareMathOperator{\Succ}{Succ}
\DeclareMathOperator{\Char}{Char}
\DeclareMathOperator{\Jac}{Jac}
\newcommand{\ov}{\overline}
\newcommand\sg[1]{{\left<{#1}\right>}}
\begin{document}
\title{The starred Dixmier conjecture for $A_1$}

\author{Vered Moskowicz}
\address{Department of Mathematics, Bar-Ilan University, Ramat-Gan 52900, Israel.}
\email{vered.moskowicz@gmail.com}
\thanks{Vered Moskowicz was supported partially by an Israel-US BSF grant \#2010/149}
\author{Christian Valqui}
\address{Pontificia Universidad Cat\'olica del Per\'u PUCP, Av. Universitaria 1801, 
San Miguel, Lima 32, Per\'u.}

\address{Instituto de Matem\'atica y Ciencias Afines (IMCA) Calle Los Bi\'ologos 245. Urb San C\'esar. 
La Molina, Lima 12, Per\'u.}
\email{cvalqui@pucp.edu.pe}
\thanks{Christian Valqui was supported by PUCP-DGI-2012-0011 and PUCP-DGI-2013-3036}
\begin{abstract}
Let $A_1(K)=K \langle x,y | yx-xy= 1 \rangle$ be the first Weyl algebra over a characteristic zero field 
$K$ and let
$\alpha$ be the exchange involution on $A_1(K)$ given by $\alpha(x)= y$ and $\alpha(y)= x$. 
The Dixmier conjecture of Dixmier (1968)
asks: Is every algebra endomorphism of the Weyl algebra $A_1(K)$ an automorphism?
The aim of this paper is to prove that each $\alpha$-endomorphism of $A_1(K)$ is an automorphism.
Here an $\alpha$-endomorphism of $A_1(K)$ is an endomorphism which preserves the involution $\alpha$. 
We also prove an analogue result for the Jacobian conjecture in dimension 2, called $\alpha-JC_2$.
\end{abstract}

\maketitle

\section{Introduction}
By definition, the $n$'th Weyl algebra $A_n(K)= A_n$ is the unital associative $K$-algebra generated 
by $2n$ elements
$x_1, \ldots, x_n, y_1, \ldots, y_n$ subject to the following defining relations:
$[y_i,x_j]= \delta_{ij}$, $[x_i,x_j]= 0$ and $[y_i,y_j]= 0$, where $\delta_{ij}$ is the Kronecker delta.

Here we will only deal with the first Weyl algebra $A_1(K)= K\sg{x,y | yx-xy= 1}$, where $\Char(K)= 0$.
(Only in Proposition \ref{main result} the field $K$ is not necessarily of zero characteristic).

In~\cite{adja} Adjamagbo and van den Essen remarked that $A_1$ was first studied by Dirac 
in~\cite{dirac}. Hence, they
suggest to call it ``Dirac quantum algebra" instead of (first) Weyl algebra. Similarly, 
they suggest to call $A_n$  ``$n$'th
Dirac quantum algebra" instead of $n$'th Weyl algebra. We truely do not know which name 
is better. For convenience,  in order to
maintain the same terminology used by most of the authors, we shall
continue to call $A_1$ the first Weyl algebra (and $A_n$ the $n$'th Weyl algebra).

In \cite{dixmier}, Dixmier asked six questions about the first Weyl algebra $A_1(K)$, where $K$ 
is a zero characteristic field;
the first question is the following: Is every algebra endomorphism of $A_1(K)$ an automorphism?

Usually, Dixmier's first question is brought as a conjecture; namely, the Dixmier conjecture says 
that every algebra endomorphism of
$A_1(K)$ is an automorphism.

In order to define a $K$-algebra homomorphism $f: A_1 \longrightarrow A_1$, it is enough to fix 
$f(x)$ and $f(y)$ such that
$[f(y),f(x)]= f(y)f(x)- f(x)f(y)= 1$, and extend it as an algebra homomorphism, and similarly 
for an antihomomorphism.

Recall that an antihomomorphism satisfies $f(ab)= f(b)f(a)$ and notice that the mapping
$\alpha: A_1 \longrightarrow A_1$ defined by $\alpha(x)= y$ and $\alpha(y)= x$, is an involution on $A_1$.
Indeed, $\alpha$ is an antihomomorphism of order $2$, so it is an antiautomorphism of order
$2$. This mapping $\alpha$ is sometimes called the exchange involution.

Of course, there are other involutions on $A_1$. For example, given any automorphism $g$ of $A_1$, 
$g^{-1} \alpha g$ is
clearly an involution on $A_1$.

Generally, it is easy to see that each involution on $A_1$ is of the form $h \alpha$, where $h$ 
is an automorphism of
$A_1$ which satisfies the following condition $h \alpha h \alpha= 1$.
Indeed, let $\beta$ be any involution on $A_1$. Then $\beta \alpha$ is an automorphism of $A_1$, 
call it $h$. From
$\beta \alpha= h$ follows $\beta= h \alpha$. Of course, since $\beta^2= 1$, we get $h \alpha h \alpha= 1$.

\begin{definition}\label{defin alpha endo}
An $\alpha$-endomorphism $f$ of $A_1$ is an endomorphism of $A_1$ which preserves the involution $\alpha$.
Preserving the involution $\alpha$ means that for every $w \in A_1$, $f(\alpha(w))= \alpha(f(w))$.
So an $\alpha$-endomorphism of $A_1$, $f$, is an endomorphism of $A_1$ which commutes with $\alpha$
($f \circ \alpha= \alpha \circ f$).
\end{definition}

It is easy to see that
 $$
 f(\alpha(w))= \alpha(f(w)),\ \forall w\in A_1 \quad \Longleftrightarrow\quad 
 f(\alpha(x))= \alpha(f(x)),\quad f(\alpha(y))= \alpha(f(y)).
$$
Therefore, $f$ is an $\alpha$-endomorphism of $A_1$, if $f$ is an endomorphism of $A_1$, for which
$f(\alpha(x))= \alpha(f(x))$ and $f(\alpha(y))= \alpha(f(y))$.

Now, one may pose the ``$\alpha$-Dixmier conjecture" or the ``starred Dixmier conjecture": Every
$\alpha$-endomorphism
of $A_1(K)$ ($\Char(K)=0$) is an automorphism.

\begin{remark}
The exchange involution $\alpha$ may be denoted by ``$*$ on the right", instead of ``$\alpha$ on the left"
(namely,
$x^*= y$ and $y^*= x$ instead of $\alpha(x)= y$ and $\alpha(y)= x$), hence the name the ``starred Dixmier
conjecture".
\end{remark}

In the following example we describe a family of $\alpha$-endomorphisms of any given even degree $2n$ (this
family also includes
degree $1$ $\alpha$-endomorphisms),
which is actually a family of $\alpha$-automorphisms:

\begin{example}\label{family of alpha auto} Let $n\in \mathds{N}_0$ and $a,b,c_0,\dots,c_n\in K$, with 
$a^2 - b^2 = 1$, and define
$S_j:=c_{j}(x-y)^{2j}$.
The following $f$ defines an $\alpha$-automorphism:
$$
f(x):= ax+ by+ \sum_{j= 0}^n S_j\quad\text{and}\quad f(y):= ay+ bx+ \sum_{j= 0}^n S_j.
$$
In fact, clearly $\alpha(S_j)=S_j$ and so $f(\alpha(x))= \alpha(f(x))$ and $f(\alpha(y))= \alpha(f(y))$.
Moreover,
\begin{align*}
    [f(y),f(x)]&=[ ay+ bx,ax+ by]+\! \left[ ay+ bx\pm ax,\sum_{j= 0}^n S_j\right]+
    \left[\sum_{j= 0}^n S_j,ax+ by\pm bx\right] \\
    &=a^2-b^2+\left[ (a+ b)x,\sum_{j= 0}^n S_j\right]+\left[\sum_{j= 0}^n S_j,(a+b)x\right]=1,
\end{align*}
where the second equality follows from the fact that $[x-y,S_j]=0$.
A straightforward computation shows that for
$$
 f_2:x\mapsto \frac x{a-b},\ y\mapsto (a-b)y,\quad f_3:x\mapsto x+y,\ y\mapsto y,\quad 
 f_4: x\mapsto x-y,\  y\mapsto y
$$
and
$$
f_1:x\mapsto x,\ y\mapsto y-\frac{bx}{a-b}-\sum_{j=0}^n c_j\left(\frac x{a-b}\right)^{2j},
$$
we have $id=f_1\circ f_2 \circ f_3 \circ f \circ f_4$ and so $f^{-1}=f_4\circ f_1\circ f_2\circ f_3$,
explicitly
$$
f^{-1}(x)=ax-by+(b-a)\sum_{j= 0}^n \tilde{S}_j\quad\text{and}\quad f^{-1}(y)=ay-bx+(b-a)\sum_{j= 0}^n
\tilde{S}_j,
$$
where $\tilde S_j:=c_j\left(\displaystyle\frac{x-y}{a-b}\right)^{2j}$.
\end{example}
Surprisingly, the family of $\alpha$-automorphisms of Example \ref{family of alpha auto} describes all the
$\alpha$-endomorphisms
of $A_1(K)$ (see Corollary \ref{main corollary}). Therefore, the starred Dixmier conjecture is true.

\section{Proof of the starred Dixmier conjecture}
Consider the automorphism $\varphi$ of $A_1$ given by $\varphi(x):=\displaystyle\frac{x+y}2$ and 
$\varphi(y):= y-x$. Then
$\varphi^{-1}(x)=x-\displaystyle\frac y2$,
$\varphi^{-1}(y)=x+\displaystyle\frac y2$ and
the antihomomorphism $\beta:=\varphi^{-1}\circ \alpha \circ\varphi$ is given by $\beta(x)=x$ and 
$\beta(y)= -y$.
Notice that $\beta$ is an involution.

For the rest of the section we fix an $\alpha$-endomorphism  $f$. Then the endomorphism 
$\tilde f:=\varphi^{-1}\circ f$ satisfies
$$
\beta(\tilde f(x))=\varphi^{-1}\circ \alpha \circ\varphi(\varphi^{-1} \circ f(x))=\varphi^{-1} 
\alpha( f(x))=\varphi^{-1}( f(\alpha(x)))=\tilde f(y)
$$
and similarly $\beta(\tilde f(y))=\tilde f(x)$. (Note that $\tilde f$ is NOT a $\beta$-endomorphism).

\noindent We set $P:=\tilde f(x)$ and $Q:=\tilde f(y)$. Since $\tilde f$ is an endomorphism, we have $[Q,P]=1$.
We decompose $P$ into its symmetric and antisymmetric terms with respect to $\beta$:
$P= P_0+P_1$ with $P_0=(P+\beta(P))/2$ and
$P_1=(P-\beta(P))/2$. Note that $\beta(P_0)=P_0$ and $\beta(P_1)=-P_1$. Now $\beta(P)=Q$ implies $Q=P_0-P_1$,
and so
$$
1=[Q,P]=[P_0-P_1,P_0+P_1]=2[P_0,P_1],
$$
hence
$[P_0,P_1]=1/2$.

We recall some definitions and notations of~\cite{GGV}. Let $L:=K[X,Y]$ be the polynomial $K$-algebra in two
variables and let $\Psi\colon A_1\to L$
be the $K$-linear map defined by $\Psi\bigl(x^iy^j\bigr) := X^i Y^j$. Let
\begin{align*}
& \ov{\mathfrak{V}} := \{(\rho,\sigma)\in \mathds{Z}^2: \text{$\gcd(\rho,\sigma) = 1$ and $\rho+\sigma\ge 0$}\}
\intertext{and}
&\mathfrak{V} := \{(\rho,\sigma)\in \ov{\mathfrak{V}}: \rho+\sigma> 0 \}.
\end{align*}
Note that $\ov{\mathfrak{V}} = \mathfrak{V}\cup \{(1,-1),(-1,1)\}$.

\begin{definition} For all $(\rho,\sigma)\in \ov{\mathfrak{V}}$ and $(i,j)\in \mathds{Z}\times \mathds{Z}$ we
write
$$
v_{\rho,\sigma}(i,j):= \rho i + \sigma j.
$$
\end{definition}

\begin{notations}\label{not valuaciones para polinomios} Let $(\rho,\sigma)\in \ov{\mathfrak{V}}$. For 
$P = \sum a_{ij} X^i Y^j\in L\setminus\{0\}$, we define:

\begin{itemize}

\smallskip

\item[-] The {\em support} of $P$ as
$$
\Supp(P) := \left\{(i,j): a_{ij}\ne 0\right\}.
$$

\smallskip

\item[-] The {\em $(\rho,\sigma)$-degree} of $P$ as $v_{\rho,\sigma}(P):= 
\max\left\{ v_{\rho,\sigma}(i,j): a_{ij} \ne 0 \right\}$.

\smallskip

\item[-] The {\em $(\rho,\sigma)$-leading term} of $P$ as
$$
\ell_{\rho,\sigma}(P):= \displaystyle \sum_{\{\rho i + \sigma j = v_{\rho,\sigma}(P)\}} a_{ij} X^i Y^j.
$$

\smallskip

\item[-] $w(P):= \left(i_0,i_0-v_{1,-1}(P)\right)$ such that
$$
i_0 = \max \left\{i: \left(i,i-v_{1,-1}(P) \right) \in \Supp(\ell_{1,-1}(P))\right\},
$$

\smallskip

\item[-] $\ov{w}(P):= \left(i_0-v_{-1,1}(P),i_0\right)$ such that
$$
i_0 = \max \left\{i: \left(i-v_{-1,1}(P),i \right) \in \Supp(\ell_{-1,1}(P))\right\},
$$

\end{itemize}
\end{notations}

\begin{notations}\label{not valuaciones para alg de Weyl} Let $(\rho,\sigma)\in \ov{\mathfrak{V}}$. For 
$P \in A_1\setminus\{0\}$, we define:

\begin{itemize}

\smallskip

\item[-] The {\em support} of $P$ as $\Supp(P) := \Supp\bigl(\Psi(P)\bigr)$.

\smallskip

\item[-] The {\em $(\rho,\sigma)$-degree} of $P$ as $v_{\rho,\sigma}(P):= v_{\rho,\sigma}
    \bigl(\Psi(P)\bigr)$.

\smallskip

\item[-] The {\em $(\rho,\sigma)$-leading term} of $P$ as $\ell_{\rho,\sigma}(P):= \ell_{\rho,\sigma}
    \bigl(\Psi(P)\bigr)$.

\smallskip

\item[-] $w(P):= w\bigl(\Psi(P)\bigr)$ and $\ov{w}(P):= \ov{w}\bigl(\Psi(P)\bigr)$.

\end{itemize}
\end{notations}

\begin{definition}\label{Comienzo y Fin de un elemento de W^{(l)}} Let $(\rho,\sigma)\in \ov{\mathfrak{V}}$ 
and let $P\in W\setminus\{0\}$.

\begin{itemize}

\smallskip

\item[-] If $(\rho,\sigma)\ne (1,-1)$, then the {\em starting point of $P$ with respect to 
$(\rho,\sigma)$} is
$$
\st_{\rho,\sigma}(P) = w(\ell_{\rho,\sigma}(P)).
$$

\smallskip

\item[-] If $(\rho,\sigma)\ne (-1,1)$, then the {\em end point of $P$ with respect to $(\rho,\sigma)$} is
$$
\en_{\rho,\sigma}(P) = \ov{w}(\ell_{\rho,\sigma}(P)).
$$
\end{itemize}
\end{definition}
\begin{lemma}\label{standard form}
Let $P,Q\in A_1$ with $[Q,P]=1$. If $\ell_{1,0}(Q)=\lambda X^s Y$ for some $\lambda\in K^*$, then $s=0$.
\end{lemma}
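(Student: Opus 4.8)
The plan is to pass to the associated graded algebra for the $(1,0)$-filtration. Normalizing a product $x^iy^j\cdot x^ky^l$ only lowers the $X$-degree, so the filtration $F_d:=\{P\in A_1:v_{1,0}(P)\le d\}$ is multiplicative, its associated graded is the commutative ring $L=K[X,Y]$, and $\ell_{1,0}$ is multiplicative: $\ell_{1,0}(ab)=\ell_{1,0}(a)\ell_{1,0}(b)$. Since $yx-xy=1$ lowers the $(1,0)$-degree by exactly $1$, the commutator descends to the Poisson bracket $\{F,G\}:=\partial_YF\,\partial_XG-\partial_XF\,\partial_YG$ on $L$ (normalized so that $\{Y,X\}=1$), which has $(1,0)$-degree $-1$. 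Concretely $\ell_{1,0}(QP)=\ell_{1,0}(Q)\ell_{1,0}(P)=\ell_{1,0}(PQ)$, so the top terms of $QP$ and $PQ$ cancel, and whenever $\{\ell_{1,0}(Q),\ell_{1,0}(P)\}\ne0$ one has $v_{1,0}([Q,P])=v_{1,0}(Q)+v_{1,0}(P)-1$ and $\ell_{1,0}([Q,P])=\{\ell_{1,0}(Q),\ell_{1,0}(P)\}$.

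I then apply this to $[Q,P]=1$. Writing $d:=v_{1,0}(P)$ and $\ell_{1,0}(P)=X^dh(Y)$ with $h\ne0$ (all top terms share the $X$-degree $d$), a direct computation with $\ell_{1,0}(Q)=\lambda X^sY$ gives
\[
\{\lambda X^sY,\,X^dh(Y)\}=\lambda X^{s+d-1}\bigl(d\,h(Y)-sY\,h'(Y)\bigr),
\]
and I split according to whether this leading bracket vanishes.

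If the bracket is nonzero, it equals $\ell_{1,0}([Q,P])=\ell_{1,0}(1)=1$. Comparing $X$-degrees forces $s+d=1$, and comparing $Y$-degrees forces $d\,h-sY\,h'$ to be a nonzero constant. Of the two options, $(s,d)=(1,0)$ yields $d\,h-sY\,h'=-Y\,h'(Y)$, which cannot be a nonzero constant since it has no constant term; hence $(s,d)=(0,1)$ and $s=0$, as claimed. This non-degenerate case is essentially a one-line degree count.

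The remaining case $\{\ell_{1,0}(Q),\ell_{1,0}(P)\}=0$ is the main obstacle. The relation $d\,h=sY\,h'$ forces $h$ to be a single monomial, so $\ell_{1,0}(P)=c\,(X^sY)^m$ while $\ell_{1,0}(Q)=\lambda\,(X^sY)$; both leading terms are scalar multiples of powers of $u:=x^sy$, and since the powers of $u$ pairwise commute, this is exactly why the first-order bracket collapses. Assuming $s\ge1$ for contradiction, I would exclude this resonant situation by a sub-leading (and, if needed, inductive) analysis: after subtracting the leading terms one computes the genuine $(1,0)$-leading term of $[Q,P]$ from $[\lambda x^sy,\,c\,x^{sm}y^m]$ together with the cross-commutators $[\lambda x^sy,P']$, $[Q',c\,x^{sm}y^m]$ and $[Q',P']$, and shows that the surviving contribution cannot reduce to the constant $1$ (it retains either positive $(1,0)$-degree or positive $Y$-degree). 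Since a vanishing first-order bracket occurs only for such resonant leading terms, the contradiction forces $s=0$. I expect this degenerate case to be the crux; the non-degenerate case is immediate from the Poisson computation above.
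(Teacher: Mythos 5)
Your non-resonant case (leading Poisson bracket nonzero) is correct and is essentially the same degree count the paper uses. But the resonant case is not a proof: the decisive claim --- that after subtracting the leading terms the surviving contribution to $[Q,P]$ ``cannot reduce to the constant $1$'' --- is exactly what has to be established, and you only assert it. Moreover, the route you sketch is unlikely to close as stated. Writing $P=c\,x^{sm}y^m+P'$ and $Q=\lambda x^sy+Q'$, the cross-commutators $[\lambda x^sy,P']$ and $[Q',c\,x^{sm}y^m]$ involve the completely unknown lower-order parts $P'$, $Q'$, and nothing prevents their first-order contributions from cancelling the second-order term of $[\lambda x^sy,c\,x^{sm}y^m]$ and leaving a new leading term that is again a scalar multiple of a power of $X^sY$. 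Resonance can propagate through every level, so a single sub-leading comparison cannot settle the matter; one needs a genuine descent or induction, and your proposal never sets up the invariant that would make it terminate.

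The paper closes precisely this gap with a minimality argument. Among all $T$ with $[Q,T]=1$, choose $T_0$ with $v_{1,0}(T_0)$ minimal. If $v_{1,0}(T_0)=0$ then $T_0=p(y)$ and $[Q,T_0]$ has leading term divisible by $Y$, so it cannot equal $1$. If the leading brackets do not cancel, then $0=v_{1,0}([Q,T_0])=v_{1,0}(Q)+v_{1,0}(T_0)-1>0$, a contradiction. In the resonant case, \cite[Theorem 2.11]{GGV} gives $\ell_{1,0}(Q)=\lambda_Q R^n$ and $\ell_{1,0}(T_0)=\lambda_t R^m$ for a common $R$; since the $Y$-degree of $\ell_{1,0}(Q)=\lambda X^sY$ is $1$, necessarily $n=1$, so $\ell_{1,0}(T_0)$ is a scalar multiple of $\bigl(\ell_{1,0}(Q)\bigr)^m$, and $T_1:=T_0-\frac{\lambda_t}{\lambda_Q^m}Q^m$ still satisfies $[Q,T_1]=1$ (because $Q$ commutes with $Q^m$) but has strictly smaller $(1,0)$-degree, contradicting minimality. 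This subtraction of a power of $Q$ is the idea your sketch is missing: it converts ``both leading terms are powers of a common element'' into an actual descent. Your own resonance computation $dh=sYh'$ already reproves, in this special case, what the paper imports from \cite[Theorem 2.11]{GGV}, so your argument becomes complete if you replace the sub-leading analysis by exactly this step: set $P_1:=P-\frac{c}{\lambda^m}Q^m$, note $[Q,P_1]=1$ and $v_{1,0}(P_1)<v_{1,0}(P)$ by multiplicativity of $\ell_{1,0}$, and iterate (or, equivalently, assume $v_{1,0}(P)$ minimal from the start) until you land in one of the two cases you can already refute.
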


\begin{proof}
Assume by contradiction that $s>0$ and take $T_0$ such that $[Q,T_0]=1$ and
$$
v_{1,0}(T_0)=\min\{ v_{1,0}(T) | \ [Q,T]=1, \ T\in A_1\}.
$$
Notice that $v_{1,0}(T_0)> 0$, since otherwise $T_0\in K[y]$ and then
$T_0=p(y)$ for some polynomial $p(y)$ and $[Q,T_0]_{1,0}= - \lambda p'(y)y\ne \ell_{1,0}(1)$, a contradiction.

If $[Q,T_0]_{1,0}\ne 0$ (See~\cite[Definition 2.2]{GGV}), then we obtain the contradiction
$$
0=v_{1,0}([Q,T_0])=v_{1,0}(Q)+v_{1,0}(T_0)-1>0.
$$

On the other hand, if $[Q,T_0]_{1,0}= 0$ then by~\cite[Theorem 2.11]{GGV} there exist $R\in L$,
$\lambda_Q,\lambda_P\in K^*$ and $n,m\in\mathds{N}$
such that
$$
\ell_{1,0}(Q)=\lambda_Q R^n\quad \text{and}\quad \ell_{1,0}(T_0)=\lambda_t R^m.
$$
Clearly $n=1$ and so $\ell_{1,0}(T_0)=\frac{\lambda_t }{\lambda_Q^m}(\ell_{1,0}(Q))^m$, hence
$T_1:=T_0-\frac{\lambda_t }{\lambda_Q^m} Q^m$ satisfies $[Q,T_1]=1$ and $v_{1,0}(T_1)<v_{1,0}(T_0)$, which
contradicts the minimality
of $v_{1,0}(T_0)$ and concludes the proof.
\end{proof}

In the following proposition $K$ is any field with $\Char(K)\neq 2$, not necessarily a zero characteristic
field.

\begin{proposition}\label{main result}
Let $P_0,P_1\in A_1(K)$. Assume $2r\ne 0$ in $K$, where $r=\deg_x(P_0)$.
If $\beta(P_0)=P_0$, $\beta(P_1)=-P_1$ and $[P_0,P_1]=\frac 12$, then
$$
P_1= \lambda y\quad\text{and}\quad P_0= -\frac{1}{2\lambda} x+\sum_{j=0}^n \alpha_j y^{2j},\quad \text{
for some $\lambda\ne 0$ and $\alpha_j\in K$.}
$$
\end{proposition}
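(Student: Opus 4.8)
The plan is to reduce everything to the single claim that $P_1=\lambda y$, and then to read off $P_0$ by a one-line computation. Indeed, suppose we already know $P_1=\lambda y$ with $\lambda\in K^*$. Since the inner derivation $[\,\cdot\,,y]$ acts on normal forms as $-\partial_x$, the relation $[P_0,P_1]=\tfrac12$ becomes $-\lambda\,\partial_x(P_0)=\tfrac12$, i.e.\ $\partial_x(P_0)=-\tfrac1{2\lambda}$ is a constant (this is where $2\ne 0$, hence the hypothesis on $2r$, enters). Therefore $P_0$ is affine in $x$, say $P_0=-\tfrac1{2\lambda}x+v(y)$ with $v\in K[y]$, and $\beta(P_0)=P_0$ forces $v(-y)=v(y)$, so $v(y)=\sum_j\alpha_j y^{2j}$. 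This is exactly the asserted form, with $r=\deg_x(P_0)=1$. So it suffices to prove $P_1=\lambda y$.

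To prove $P_1=\lambda y$ I would apply Lemma \ref{standard form} to the pair $Q:=-2P_1$ and $P:=P_0$, which satisfies $[Q,P]=-2[P_1,P_0]=1$. The lemma delivers the conclusion as soon as $\ell_{1,0}(P_1)=\lambda X^s Y$ for some $s$: then $\ell_{1,0}(Q)=-2\lambda X^sY$, the lemma gives $s=0$, so $\ell_{1,0}(P_1)=\lambda Y$, and since $v_{1,0}(P_1)=0$ this means $P_1=\lambda y$. Thus the whole problem reduces to showing that the $(1,0)$-leading term of $P_1$ is a single monomial of $Y$-degree one. The key structural input is a parity statement: since $\beta(x)=x$, $\beta(y)=-y$ and $\beta$ is an anti-automorphism, $\beta$ does not raise the $x$-degree and acts on $(1,0)$-leading terms by $X^iY^j\mapsto(-1)^jX^iY^j$. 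Applying this to $\beta(P_0)=P_0$ and $\beta(P_1)=-P_1$ gives $\ell_{1,0}(P_0)=X^{r}f(Y)$ with $f$ even and $\ell_{1,0}(P_1)=X^{t}g(Y)$ with $g$ odd, where $r=v_{1,0}(P_0)$ and $t=v_{1,0}(P_1)$.

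Now I would split on the $(1,0)$-leading bracket. If $[P_0,P_1]_{1,0}\ne 0$, then $r+t-1=v_{1,0}(\tfrac12)=0$, so $r+t=1$. The case $(r,t)=(0,1)$ gives $\ell_{1,0}([P_0,P_1])=f'(Y)g(Y)=\tfrac12$; but $f'$ is odd and $g$ is odd, so $f'g$ has no nonzero constant term unless $f'=0$, which is absurd. Hence $(r,t)=(1,0)$, and there $\ell_{1,0}([P_0,P_1])=-f(Y)g'(Y)=\tfrac12$ forces $f$ and $g'$ to be nonzero constants, whence $g=\lambda Y$ and $\ell_{1,0}(P_1)=\lambda Y$, exactly the monomial needed above (in fact already with $s=0$).

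The main obstacle is the degenerate case $[P_0,P_1]_{1,0}=0$. Here \cite[Theorem 2.11]{GGV} produces a common $(1,0)$-homogeneous element $R=X^{d}\rho(Y)$ and exponents with $\ell_{1,0}(P_0)=\lambda_0 R^{n}$ and $\ell_{1,0}(P_1)=\lambda_1 R^{m}$; matching the parities of $f$ and $g$ forces $\deg\rho$ and $m$ odd and $n$ even, while $[P_0,P_1]\ne 0$ excludes $d=0$. The difficulty is that the naive descent used in the proof of Lemma \ref{standard form}, namely subtracting a scalar multiple of a power of $P_0$ from $P_1$ to lower $v_{1,0}$, is blocked precisely by the mismatch $n$ even versus $m$ odd (no power of $P_0$ can match the leading term of $P_1$). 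Eliminating this case therefore requires a finer argument: either a sub-principal-symbol computation of the next $(1,0)$-homogeneous component of $[P_0,P_1]$, or the start/end-point calculus of \cite{GGV} applied across the remaining directions $(\rho,\sigma)\in\mathfrak{V}$, in order to show that a common power with $\deg\rho\cdot m>1$ can never produce the nonzero constant $\tfrac12$. Once the degenerate case is ruled out, only $(r,t)=(1,0)$ remains, $\ell_{1,0}(P_1)=\lambda X^sY$ holds, and Lemma \ref{standard form} concludes the proof.
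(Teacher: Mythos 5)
Your reduction to the single claim $P_1=\lambda y$, the parity structure ($f$ even, $g$ odd) of the $(1,0)$-leading terms, and the analysis of the non-degenerate case $[P_0,P_1]_{1,0}\ne 0$ are all correct, and they run parallel to the paper's case $s=0$ (which the paper settles with the end-point calculus of \cite{GGV} rather than your direct computation --- an immaterial difference). But the proposal is not a proof: the degenerate case $[P_0,P_1]_{1,0}=0$ with $r,t\ge 1$ is precisely the crux of the proposition, and you do not eliminate it. You correctly observe that \cite[Theorem 2.11]{GGV} gives $\ell_{1,0}(P_0)=\lambda_0R^n$, $\ell_{1,0}(P_1)=\lambda_1R^m$ with $n$ even and $m$ odd, so that the descent used in Lemma~\ref{standard form} is blocked, and you then state that eliminating this case ``requires a finer argument,'' offering two candidate strategies without carrying out either. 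A proof that ends by naming the statement still to be proved has a genuine gap, and everything preceding it is the routine part of the argument.

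For comparison, the paper resolves exactly this case (its case $s>0$, since $r,s\ge 1$ forces the leading coefficient $sg'h-rh'g$ of $x^{r+s-1}$ in $[P_0,P_1]=\tfrac12$ to vanish) by the first strategy you gesture at, made precise through the involution: $\beta$-invariance is applied not only to the leading coefficients but to the sub-leading ones, giving $g_1=g_{10}+\frac r2 g'$ with $g_{10}$ even, as in \eqref{decomposition of g1}, and $h_1=\frac s2 h'+h_{11}$ with $h_{11}$ odd, as in \eqref{decomposition of h1}. Substituting these into the coefficient of $x^{r+s-2}$ of $[P_0,P_1]$ computed via \eqref{conmutator}, and using $sg'h-rh'g=0$, the odd part of that coefficient collapses to $2rgh''$, which must vanish because $[P_0,P_1]=\tfrac12$; since $2r\ne 0$ and $g\ne 0$, this gives $h''=0$, hence $h=\lambda y$, and then Lemma~\ref{standard form} produces the contradiction $s=0$. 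Note that the hypothesis $2r\ne 0$ is consumed precisely at this step --- not, as you claim, in the trivial step $\partial_x(P_0)=-\tfrac1{2\lambda}$, which only needs $2\ne 0$ in $K$. As it stands, your argument proves the proposition only under the additional assumption $[P_0,P_1]_{1,0}\ne 0$.
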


\begin{proof}
Write $\ell_{1,0}(P_0)=X^rg(Y)$ and $\ell_{1,0}(P_1)=X^s h(Y)$. Then
$$
\ell_{1,0}(\beta(P_0))=\ell_{1,0}(g(-y)x^r)=X^r g(-Y),
$$
and so $g(-y)=g(y)$, i.e., $g$ is even, which means $g(y)\in K[y^2]$.
Similarly
$$
\ell_{1,0}(\beta(P_1))=\ell_{1,0}(h(-y)x^s)=X^s h(-Y),
$$
and so $h(-y)=-h(y)$, i.e., $h$ is odd, which means $h(y)\in yK[y^2]$.

Assume $s=0$, then $\en_{1,0}(P_1)=(0,2k+1)$ for some $k\in\mathds{N}_0$. If 
$\en_{1,0}(P_1)\sim \en_{1,0}(P_0)$ then $P_0\in K[y]$ which
leads to the contradiction $[P_0,P_1]=0$. Hence $\en_{1,0}(P_1)\nsim \en_{1,0}(P_0)$ and so, 
by~\cite[Corollary 2.7]{GGV} we have
$[P_0,P_1]_{1,0}\ne 0$ and by~\cite[Proposition 2.4]{GGV}
$$
\en_{1,0}(P_0)+\en_{1,0}(P_1)-(1,1)=\en_{1,0}([P_0,P_1])=(0,0).
$$
It follows that $k=0$, $\en_{1,0}(P_0)=(1,0)$ and $P_1=\lambda y$ for some $\lambda\in K^*$. But then
$P_0= -\frac{1}{2\lambda} x+t(y)$ for some $t(y)\in K[y]$ and from $\beta(P_0)=P_0$ we deduce 
$t(y)=\sum_{j=0}^n \alpha_j y^{2j}$
for some $\alpha_j\in K$.

In order to finish the proof it suffices to discard the case $s>0$. So assume $s>0$ and note that the
assumptions require $r>0$.
We will consider the two highest order terms in $\deg_x$. By~\cite[Proposition 1.6]{GGV} for all 
$t(y)\in K[y]$ and $i\in \mathds{N}$ we have
\begin{equation}\label{conmutator}
    [t(y),x^i]=i x^{i-1}t'(y)+\binom{i}{2} x^{i-2}t''(y)+\dots,
\end{equation}
where from now on ``$\dots$'' denotes terms of lower $x$-degree.
Write now
$$
P_0=x^r g(y)+x^{r-1}g_1(y)+\dots\quad\text{and}\quad P_1=x^s h(y)+x^{s-1}h_1(y)+\dots.
$$
Then
$$
\beta(P_0)=g(y)x^r +g_1(-y)x^{r-1}+\ldots =x^r g(y)+r x^{r-1}g'(y) +x^{r-1}g_1(-y)+\dots
$$
From $\beta(P_0)=P_0$ we deduce $g_1(y)=rg'(y)+g_1(-y)$. Decomposing $g_1$ into its even and odd parts we
obtain
$g_1(y)=g_{10}(y)+g_{11}(y)$ with $g_{10}\in K[y^2]$ and $g_{11}\in yK[y^2]$ and so
\begin{equation}\label{decomposition of g1}
    g_1(y)=g_{10}(y) +\frac r2 g'(y).
\end{equation}
Similarly we obtain
\begin{equation}\label{decomposition of h1}
    h_1(y)=\frac s2 h'(y)+h_{11}(y)\quad\text{for some $h_{11}\in yK[y^2]$.}
\end{equation}
From~\eqref{conmutator} we deduce
\begin{align*}
[&P_0,P_1]=x^{r+s-1}(sg'h-rh'g)\\
&+x^{r+s-2}\left(\binom{s}{2}g''h-\binom{r}{2}h''g+(s-1)g'h_1-rh_1'g+sg_1'h-(r-1)h'g_1\right)+\dots
\end{align*}
We insert in the coefficient corresponding to $x^{r+s-2}$ the values of $g_1$ and $h_1$ according
to~\eqref{decomposition of g1}
and~\eqref{decomposition of h1}:
\begin{align*}
    \binom{s}{2}&g''h-\binom{r}{2}h''g+(s-1)g'h_1-rh_1'g+sg_1'h-(r-1)h'g_1=\\
    =&(r+s-1)\left( sg''h+sg'h'-rh'g'-rh''g\right)+2r gh''+\\
    &+ (s-1)g'h_{11}-r h_{11}'g+s g_{10}'h-(r-1)h'g_{10}.
\end{align*}
Now $sg'h-rh'g=0$ implies $sg''h+sg'h'-rh'g'-rh''g=0$ and moreover
$$
(s-1)g'h_{11}-r h_{11}'g+s g_{10}'h-(r-1)h'g_{10}
$$
 is even, hence
the odd part of the coefficient is $2rgh''=0$. By assumption $2r\ne 0$ and $g\ne 0$, which leads to $h''=0$. 
But then
$h=\lambda y$ for some $\lambda\in K^*$. By Lemma~\ref{standard form} this implies $s=0$, a contradiction 
which concludes the proof.
\end{proof}

\begin{corollary}\label{corollary of main result}
Assume $\Char(K)=0$. For any $\alpha$-endomorphism $f$ of $A_1(K)$ there exist  $n\in\mathds{N}_0$,
$\lambda\in K^*$ and $\alpha_0,\dots,\alpha_n\in K$ such that
$$
f(x)=\lambda(y-x)-\frac{y+x}{4 \lambda}+\sum_{j=0}^n \alpha_j (y-x)^{2j}\ ,\
f(y)=- \lambda(y-x)-\frac{y+x}{4 \lambda}+\sum_{j=0}^n \alpha_j (y-x)^{2j}.
$$
\end{corollary}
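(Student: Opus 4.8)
The plan is to reuse the reduction already carried out at the beginning of the section and then transport the conclusion of Proposition~\ref{main result} back through the automorphism $\varphi$. Recall that for the fixed $\alpha$-endomorphism $f$ we set $\tilde f = \varphi^{-1}\circ f$, $P = \tilde f(x)$, $Q = \tilde f(y)$, and decomposed $P = P_0 + P_1$ into its $\beta$-symmetric and $\beta$-antisymmetric parts, obtaining $\beta(P_0) = P_0$, $\beta(P_1) = -P_1$, $Q = P_0 - P_1$, and $[P_0,P_1] = 1/2$. The idea is to feed the pair $(P_0,P_1)$ into Proposition~\ref{main result}, read off the explicit shapes of $P_0$ and $P_1$, reconstruct $P = P_0+P_1$ and $Q = P_0-P_1$, and finally recover $f = \varphi\circ\tilde f$ by applying the algebra homomorphism $\varphi$ to $P$ and $Q$.

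First I would verify that Proposition~\ref{main result} is applicable. Since $\Char(K)=0$, its hypothesis $2r\ne 0$ (with $r=\deg_x P_0$) holds as soon as $r\ge 1$, so the only thing to rule out is $r=0$. This is the one genuinely nontrivial point, and I expect it to be the main obstacle. If $r=0$, then $P_0=p(y)\in K[y]$ is $\beta$-symmetric, hence even, and a short degree count in $\deg_x$ — of exactly the kind performed inside the proof of Proposition~\ref{main result} — shows that $[P_0,P_1]=1/2$ forces $\deg_x P_1 = 1$ with odd leading $y$-coefficient; the resulting $x^0$-coefficient of the commutator is then the product of $p'(y)$ (odd) with that leading coefficient (odd), i.e.\ an even polynomial with vanishing constant term, which cannot equal the nonzero constant $1/2$. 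Thus $r\ge 1$, so $2r\ne 0$, and Proposition~\ref{main result} yields
$$
P_1 = \lambda y,\qquad P_0 = -\frac{1}{2\lambda}\,x + \sum_{j=0}^n \alpha_j y^{2j}
$$
for some $\lambda\in K^*$ and $\alpha_0,\dots,\alpha_n\in K$.

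It then remains to undo the change of variables. From $P = P_0+P_1$ and $Q = P_0-P_1$ we obtain
$$
P = -\frac{1}{2\lambda}\,x + \lambda y + \sum_{j=0}^n \alpha_j y^{2j},\qquad
Q = -\frac{1}{2\lambda}\,x - \lambda y + \sum_{j=0}^n \alpha_j y^{2j}.
$$
Since $f = \varphi\circ\tilde f$, we have $f(x)=\varphi(P)$ and $f(y)=\varphi(Q)$, and I would finish by substituting $\varphi(x)=(x+y)/2$ and $\varphi(y)=y-x$ into these expressions. This last step is purely routine: it sends $-\frac{1}{2\lambda}x$ to $-\frac{x+y}{4\lambda}$, sends $\pm\lambda y$ to $\pm\lambda(y-x)$, and sends each $y^{2j}$ to $(y-x)^{2j}$, producing exactly the claimed formulas for $f(x)$ and $f(y)$. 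The only care needed is to recall that $\varphi$ is an algebra homomorphism, so it acts on $P$ and $Q$ by substitution of the generators, and no further normalization is required.
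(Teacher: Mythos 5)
Your proposal is correct and takes essentially the same route as the paper: the same reduction to the pair $(P_0,P_1)$ with $[P_0,P_1]=\tfrac12$, ruling out $r=\deg_x(P_0)=0$ by a degree/parity computation on the commutator (your expanded argument is precisely what the paper compresses into ``compute $\ell_{1,0}[P_0,P_1]$''), then applying Proposition~\ref{main result} and transporting the result back through $\varphi$.
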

\begin{proof} From the discussion above Lemma~\ref{standard form} we know that
$$
f(x)=\varphi \tilde f(x)=\varphi(P_0+P_1)\quad\text{and}\quad f(y)=\varphi \tilde f(y)=\varphi(P_0-P_1),
$$
for some $P_0,P_1$ satisfying $\beta(P_0)=P_0$, $\beta(P_1)=-P_1$ and $[P_0,P_1]=\frac 12$. Moreover
$r:=\deg_x(P_0)=0$ is impossible since it would lead to $P_0\in K[y^2]$ which implies $[P_0,P_1]\notin K^*$
(compute $\ell_{1,0}[P_0,P_1]$).
Hence $2r\ne 0$ and by Proposition~\ref{main result}
we know that there exist $n\in\mathds{N}_0$,
$\lambda\in K^*$ and $\alpha_0,\dots,\alpha_n\in K$ such that
$P_1=\lambda y$ and $P_0= -\frac{1}{2\lambda} x+\sum_{j=0}^n \alpha_j y^{2j}$. A straightforward computation
concludes the proof.
\end{proof}

\begin{corollary}\label{main corollary}
Assume $\Char(K)=0$. Any $\alpha$-endomorphism $f$ of $A_1(K)$ is an $\alpha$-automorphism of the same form as
in Example~\ref{family of alpha auto}.
\end{corollary}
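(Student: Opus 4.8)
The plan is to show that the explicit presentation of $f$ furnished by Corollary~\ref{corollary of main result} coincides, after a relabelling of parameters, with the family of $\alpha$-automorphisms of Example~\ref{family of alpha auto}. Since every member of that family was already verified there to be an $\alpha$-automorphism, this identification immediately yields the claim, and the whole argument reduces to matching coefficients.

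First I would take the formulas
$$
f(x)=\lambda(y-x)-\frac{y+x}{4\lambda}+\sum_{j=0}^n \alpha_j(y-x)^{2j},\qquad
f(y)=-\lambda(y-x)-\frac{y+x}{4\lambda}+\sum_{j=0}^n \alpha_j(y-x)^{2j}
$$
from Corollary~\ref{corollary of main result} and collect the coefficients of $x$ and $y$. This gives
$$
f(x)=\left(-\lambda-\frac{1}{4\lambda}\right)x+\left(\lambda-\frac{1}{4\lambda}\right)y+\sum_{j=0}^n\alpha_j(y-x)^{2j},
$$
together with the symmetric expression for $f(y)$ in which the roles of the $x$- and $y$-coefficients are interchanged. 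Setting $a:=-\lambda-\frac{1}{4\lambda}$, $b:=\lambda-\frac{1}{4\lambda}$ and $c_j:=\alpha_j$, and recalling that $(x-y)^{2j}=(y-x)^{2j}$ so that $\sum_j\alpha_j(y-x)^{2j}=\sum_j S_j$, I would observe that $f$ takes exactly the shape $f(x)=ax+by+\sum_j S_j$, $f(y)=ay+bx+\sum_j S_j$ of Example~\ref{family of alpha auto}.

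The one point requiring verification, and indeed the only genuine content of this final step, is the defining constraint $a^2-b^2=1$ of that family. I would compute $a-b=-2\lambda$ and $a+b=-\frac{1}{2\lambda}$, whence $a^2-b^2=(a-b)(a+b)=(-2\lambda)\bigl(-\tfrac{1}{2\lambda}\bigr)=1$, where the expression $\frac{1}{4\lambda}$ is legitimate precisely because $\lambda\in K^*$. This confirms that $f$ is a bona fide instance of the Example~\ref{family of alpha auto} family, hence an $\alpha$-automorphism of the asserted form. I do not expect any serious obstacle: the substance of the result was already carried by Proposition~\ref{main result} and Corollary~\ref{corollary of main result}, and what remains is a bookkeeping match of parameters secured by the single identity $a^2-b^2=1$.
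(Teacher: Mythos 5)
Your proposal is correct and follows essentially the same route as the paper's own proof: apply Corollary~\ref{corollary of main result}, collect coefficients to get $a=-\lambda-\frac{1}{4\lambda}$ and $b=\lambda-\frac{1}{4\lambda}$ (the paper's $a=(-4\lambda^2-1)/4\lambda$, $b=(4\lambda^2-1)/4\lambda$), and check $a^2-b^2=1$, so that $f$ lies in the family of Example~\ref{family of alpha auto}. The only cosmetic difference is that you verify $a^2-b^2=1$ by factoring $(a-b)(a+b)$, while the paper simply asserts it.
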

\begin{proof}
Let $f$ be an $\alpha$-endomorphism. Apply Corollary \ref{corollary of main result} to $f$ to get
$n\in\mathds{N}_0$,
$\lambda\in K^*$ and $\alpha_0,\dots,\alpha_n\in K$ such that
$$
f(x)=\lambda(y-x)-\frac{y+x}{4 \lambda}+\sum_{j=0}^n \alpha_j (y-x)^{2j}\ \text{,}\quad
f(y)=- \lambda(y-x)-\frac{y+x}{4 \lambda}+\sum_{j=0}^n \alpha_j (y-x)^{2j}.
$$
Hence,
$$
f(x)= ax+by +\sum_{j=0}^n \alpha_j (y-x)^{2j}\quad \text{and}\quad
f(y)= ay+bx +\sum_{j=0}^n \alpha_j (y-x)^{2j},
$$
where $a= (-4\lambda^2-1)/ 4\lambda$ and $b= (4\lambda^2-1)/ 4\lambda$ satisfy $a^2 -b^2=1$,
and so the endomorphism is of the desired form.
\end{proof}
\begin{theorem}[The starred Dixmier conjecture is true]
Assume $\Char(K)=0$. Any $\alpha$-endomorphism $f$ of $A_1(K)$ is an $\alpha$-automorphism.
\end{theorem}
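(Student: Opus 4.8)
The plan is to deduce this theorem directly from Corollary~\ref{main corollary}, which already packages the entire argument: it asserts that every $\alpha$-endomorphism $f$ is in fact an $\alpha$-automorphism of the explicit shape appearing in Example~\ref{family of alpha auto}. Thus the proof itself reduces to a single invocation of that corollary, and all the substance sits in the chain of results preceding it. I would therefore present the theorem as the clean endpoint of that chain, and spend my effort making sure each link is in place.

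To trace that chain as I would build it: first I would conjugate the exchange involution $\alpha$ by the automorphism $\varphi$ to obtain the simpler involution $\beta$ fixing $x$ and negating $y$, and pass from $f$ to $\tilde f=\varphi^{-1}\circ f$. Writing $P=\tilde f(x)$ and $Q=\tilde f(y)$, the relation $[Q,P]=1$ together with $\beta(P)=Q$ lets me split $P$ into its $\beta$-symmetric and $\beta$-antisymmetric parts $P_0,P_1$ and collapse the single defining relation into $[P_0,P_1]=\tfrac12$, which is the input required by the normal-form result.

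The heart of the matter is Proposition~\ref{main result}, which I expect to be the main obstacle: it forces $P_1=\lambda y$ and $P_0=-\tfrac{1}{2\lambda}x+\sum_j\alpha_j y^{2j}$. The delicate point there is eliminating the case $s=\deg_x(P_1)>0$; this requires a careful expansion of the two top $\deg_x$-terms of the commutator $[P_0,P_1]$, substitution of the parity constraints~\eqref{decomposition of g1} and~\eqref{decomposition of h1}, and isolation of the odd part $2r\,gh''=0$, which combined with Lemma~\ref{standard form} produces the contradiction that rules out $s>0$.

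Finally I would transport this normal form back through $\varphi$, exactly as in Corollary~\ref{corollary of main result}, rewrite the coefficients as $a=(-4\lambda^2-1)/4\lambda$ and $b=(4\lambda^2-1)/4\lambda$ so that $a^2-b^2=1$, and observe that the resulting $f$ is literally a member of the family in Example~\ref{family of alpha auto}. Since every member of that family was shown there to be invertible with an explicitly exhibited inverse, $f$ is an $\alpha$-automorphism, which completes the proof.
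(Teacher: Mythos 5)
Your proposal is correct and matches the paper's own proof exactly: the theorem is stated as an immediate consequence of Corollary~\ref{main corollary} together with Example~\ref{family of alpha auto}, and your tracing of the supporting chain (the $\beta$-decomposition, Proposition~\ref{main result}, Lemma~\ref{standard form}, and Corollary~\ref{corollary of main result}) follows the paper's argument step for step.
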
\begin{proof} Clear from Corollary~\ref{main corollary} and 
Example~\ref{family of alpha auto}.\end{proof}

\section{Related topics}
We suggest to consider the following topics. In those topics we usually take the exchange involution $\alpha$,
although one may
take other involutions as well.

\subsection{Prime characteristic case}

When $K$ is of prime characteristic, $A_1(K)$ is not simple. Bavula asked the following question: Is every
algebra endomorphism
of the first Weyl algebra $A_1(K)$, $\Char(K)= p > 0$, a monomorphism? Makar-Limanov \cite{makar BC} gave a
positive answer to
that question. However, the following endomorphism $f$
($f$ is necessarily a monomorphism) is not onto, since $x$ is not in the image of $f$:

$f: A_1 \longrightarrow A_1$ such that $f(x)= x+x^p$ and $f(y)= y$.

Note that Proposition~\ref{main result} is valid for $\Char(K)\ne 2$. This suggest that
the following is true (the starred Dixmier conjecture in the
positive characteristic case, compare with~\cite{adja}):

``Let $\Char(K)=p>2$. Any $\alpha$-endomorphism of $A_1(K)$ is an automorphism if and only if its
restriction to the center of $A_1$ induces a field extension of degree not
a multiple of $p$ and the jacobian of this restriction is a nonzero element of
$K$."

\subsection{Higher Weyl algebras}

Assume $\Char(K)= 0$ and let $A_n(K)$ be the $n$'th Weyl algebra and $\alpha:A_n\to A_n$ an involution, for
example the anti-homomorphism given
by $x_i\mapsto y_i$, $y_i\mapsto x_i$. If one generalizes the geometric methods of~\cite{GGV} to higher
dimensions, one could try
to prove the $n$'th starred Dixmier conjecture ($\alpha-D_n$):
``Any $\alpha$-endomorphism of $A_n$ is an automorphism."

\subsection{Same questions for other algebras}
One may wish to ask similar questions for other algebras, see, for example, \cite{bavula analogue}.
In algebras where an involution can be defined, one may wish to see if the presence of an involution 
may be of any help in solving such questions.

\subsection{Connection to the Jacobian conjecture}
The connection between Dixmier's problem $1$ and the Jacobian conjecture is as follows:
\begin{itemize}
\item [(1)] The $n$'th Dixmier conjecture, $D_n$
(Every endomorphism of $A_n(K)$ is an automorphism),
implies $JC_n$, the $n$-dimensional Jacobian conjecture, 
see~\cite[Theorem 4.2.8]{jacobian van den essen}.

\item [(2)] $JC_{2n}$ $\Rightarrow$ $D_n$.
\end{itemize}
Interestingly, the Jacobian conjecture-$2n$ implies the $n$'th Dixmier conjecture.
This was proved independently by Tsuchimoto \cite{tsuchimoto} and by Belov and Kontsevich \cite{belov}.
A shorter proof can be found in \cite{bavula jacobian}.
For a detailed background on the Jacobian conjecture, see, for example,~\cite{bass connell wright} 
or~\cite{jacobian van den essen}.
One may pose the $\alpha$-Jacobian conjecture ($\alpha$-JC$_{2n}$) in dimension $2n$: ``Let
$\alpha:X_{2n-1}\leftrightarrow X_{2n}$
be the exchange involution on $K[X_1,\dots,X_{2n}]$. If $f: K[X_1,\dots,X_{2n}] \to K[X_1,\dots,X_{2n}]$ 
is an $\alpha$-morphism which satisfies
$\Jac(f(X_1),\dots,f(X_{2n}))=1$, then $f$ is invertible."

One may ask the following question:  $\alpha-JC_{2n}$ $\Rightarrow$ $\alpha-D_{n}$ ?

In the previous section we proved $\alpha-D_1$ and in the last section we will prove $\alpha-JC_{2}$; 
however we don't know if
one can deduce $\alpha-D_1$ directly from $\alpha-JC_{2}$.

\section{Analogue results for the Jacobian conjecture}

There is a closed connection between the shape of possible counterexamples to the Jacobian conjecture in
dimension 2 and
the shape of possible counterexamples to the Dixmier conjecture (in dimension 1). Let 
$\alpha:K[X,Y]\to K[X,Y]$ be the
exchange involution $X\leftrightarrow Y$. We say that $f:K[X,Y]\to K[X,Y]$ is an $\alpha$-morphism, if 
$f \circ \alpha= \alpha \circ f$.
Note that the symmetry determined by $\alpha$
is not any of the symmetries analyzed in~\cite{debondt} or~\cite{vdEW}.

One has the following result:
\begin{proposition}($\alpha-JC_{2}$ is true.)\label{JC2}
If $K$ is a field of characteristic zero and $f:K[X,Y]\to K[X,Y]$ is an $\alpha$-morphism that satisfies
$\Jac(f(X),f(Y))=1$,
then $f$ is an automorphism.
\end{proposition}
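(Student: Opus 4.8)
The plan is to mirror the proof of the starred Dixmier conjecture in Section 2, replacing the commutator by the Jacobian bracket $\Jac(\,\cdot\,,\cdot\,)$, which is an antisymmetric biderivation on $L=K[X,Y]$ and plays exactly the role of the Poisson bracket (so that the leading-term lemmas of \cite{GGV}, stated for $L$ in Notations \ref{not valuaciones para polinomios}, apply). First I would put the involution into normal form using the same $\varphi$ and $\beta$, now read on $K[X,Y]$: the Jacobian of $\varphi$ equals $\tfrac12\cdot1-\tfrac12\cdot(-1)=1$, so by the chain rule $\tilde f:=\varphi^{-1}\circ f$ still satisfies $\Jac(\tilde f(X),\tilde f(Y))=\Jac(f(X),f(Y))=1$, while $\beta=\varphi^{-1}\circ\alpha\circ\varphi$ acts by $\beta(X)=X$, $\beta(Y)=-Y$. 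Setting $P:=\tilde f(X)$ and $Q:=\tilde f(Y)=\beta(P)$ and decomposing $P=P_0+P_1$ into its $\beta$-symmetric and $\beta$-antisymmetric parts, antisymmetry and bilinearity give $1=\Jac(P_0+P_1,P_0-P_1)=-2\Jac(P_0,P_1)$, so $\Jac(P_0,P_1)=-\tfrac12$, with $P_0$ even and $P_1$ odd in $Y$.

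The heart of the matter is then the commutative analogue of Proposition \ref{main result}: if $\Jac(P_0,P_1)=c\in K^*$ with $P_0$ even and $P_1$ odd in $Y$, then $P_1=\lambda Y$ and $P_0=\mu X+\sum_j\alpha_j Y^{2j}$. I would run the same $\deg_x$ analysis. Writing $\ell_{1,0}(P_0)=X^r g(Y)$ and $\ell_{1,0}(P_1)=X^s h(Y)$, the $\beta$-symmetry forces $g$ even and $h$ odd (here, since $\beta$ is a ring homomorphism, there are no commutation corrections, so each lower coefficient is itself even, resp.\ odd). The case $s=0$ is even cleaner than in the Weyl setting: then $P_1=h(Y)$ and $\Jac(P_0,P_1)=h'(Y)\,\partial_X P_0=c$, forcing both factors to be nonzero constants, whence $h=\lambda Y$ and $P_0=\mu X+t(Y)$ with $t$ even, as desired; in particular $r=1$, and the degenerate possibility $r=0$ is excluded exactly as in Corollary \ref{corollary of main result}.

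The main obstacle is to discard $s>0$, and here the commutative setting genuinely differs from the Weyl one: the Jacobian bracket carries none of the lower-order correction terms of \eqref{conmutator}, so the decisive term $2rgh''$ that produced $h''=0$ in Proposition \ref{main result} is absent, and the coefficient of $X^{r+s-2}$ in $\Jac(P_0,P_1)$ turns out to be entirely even in $Y$, leaving no parity argument at that order. Instead I would argue by degree reduction. Since $r+s-1>0$, the leading coefficient $rgh'-sg'h$ must vanish, which integrates to $h^r=Cg^s$; by \cite[Theorem 2.11]{GGV} the leading terms $\ell_{1,0}(P_0)$ and $\ell_{1,0}(P_1)$ are then proportional to powers of a common polynomial. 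One may now subtract a suitable power of $P_0$ from $P_1$ (or conversely) to lower $\deg_x$: this automatically preserves the bracket, as $\Jac(P_0,P_0^k)=0$, and the key point is that the $\beta$-parities pin down the common polynomial to be $\beta$-odd and fix the parities of the exponents, so that the Euclidean-type reduction can be carried out compatibly with the even/odd constraints and terminates with $h$ linear, at which stage the commutative analogue of Lemma \ref{standard form} yields $s=0$, a contradiction. Verifying that this reduction respects the parities and terminates is the technical crux.

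Finally, transporting $P_1=\lambda Y$ and $P_0=\mu X+\sum_j\alpha_j Y^{2j}$ back through $\varphi$ gives, just as in Corollary \ref{main corollary}, $f(X)=aX+bY+\sum_j\alpha_j(Y-X)^{2j}$ and $f(Y)=aY+bX+\sum_j\alpha_j(Y-X)^{2j}$ with $a^2-b^2=1$. This is the commutative counterpart of the family in Example \ref{family of alpha auto}, and it is an automorphism for the same reason: it factors as a composite $\ide=f_1\circ f_2\circ f_3\circ f\circ f_4$ of affine and triangular automorphisms, yielding an explicit inverse. Hence $f$ is an automorphism, which proves $\alpha\text{-}JC_2$.
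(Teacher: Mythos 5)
Your framing steps are sound and essentially parallel to the paper's: the paper conjugates, setting $\tilde f:=\varphi^{-1}\circ f\circ\varphi$, so that $P:=\tilde f(X)$ and $Q:=\tilde f(Y)$ are themselves $\beta$-even and $\beta$-odd, whereas you compose on one side and then split into parities; both routes produce a pair $(P_0,P_1)$ with $P_0$ even, $P_1$ odd and $\Jac(P_0,P_1)\in K^{\times}$, and your treatment of the case $s=0$ and the final transport back through $\varphi$ are correct. You also diagnose correctly why Proposition~\ref{main result} does not transfer: the coefficient of $X^{r+s-2}$ in $\Jac(P_0,P_1)$ is entirely even in $Y$, so the decisive term $2rgh''$ of~\eqref{conmutator} has no commutative counterpart.

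The genuine gap is the Euclidean-type reduction you propose in its place, for the case $s>0$. Write $\ell_{1,0}(P_0)=\lambda_0R^{n}$ and $\ell_{1,0}(P_1)=\lambda_1R^{m}$ with $R=X^{u}\rho(Y)$ and $\gcd(n,m)=1$ (this common-root statement is the classical commutative fact, not \cite[Theorem 2.11]{GGV}, which is about the Weyl algebra). The parities do force exactly what you say: $\rho$ odd, $n$ even, $m$ odd --- but nothing more. Now the reduction is one-sided and conditional. Every polynomial in $P_0$ is $\beta$-even, so subtracting $cP_0^{k}$ from $P_1$ both destroys the oddness of $P_1$ and can never cancel its leading term (it would equate the nonzero odd polynomial $h$ with the even polynomial $cg^{k}$); hence you may only replace $P_0$ by $P_0-cP_1^{k}$, and cancellation of leading terms then forces $n=mk$, i.e.\ $m\mid n$, i.e.\ $m=1$. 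Parity gives no control over $m$: nothing in your argument excludes, say, $\ell_{1,0}(P_0)=(X\rho)^{2}$ and $\ell_{1,0}(P_1)=(X\rho)^{3}$ with $\rho$ odd, a configuration with $n=2$, $m=3$ that satisfies the leading relation $rgh'-sg'h=0$ and all parity constraints. There no subtraction compatible with the bracket and the parities lowers any degree, so your induction stalls before ever reaching ``$h$ linear''; ruling out precisely these coprime configurations is the hard core of the proposition. This is why the paper abandons the top-$x$-degree analysis altogether and instead runs the Newton-polygon machinery of~\cite{GGV1} (the $\ell\ell_{\rho,\sigma}$, $\Succ$, $\st$, $\en$ apparatus), rotating the direction from $(0,1)$ to $(-1,1)$ and from $(1,-1)$ to $(1,0)$ so as to control the entire support of $P$ and $Q$, not merely its top edge --- the paper's closing Remark makes exactly this point. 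As written, your third step restates the difficulty rather than resolving it, and completing it would need an input of the same strength as the cited machinery.
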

\begin{proof}
Let $\varphi:K[X,Y]\to K[X,Y]$ be given by $\varphi(X):=\displaystyle\frac{X+Y}2$ and $\varphi(Y):= Y-X$. Then
$\varphi^{-1}(X)=X-\displaystyle\frac Y2$,
$\varphi^{-1}(Y)=X+\displaystyle\frac Y2$ and
the homomorphism $\beta:=\varphi^{-1}\circ \alpha \circ\varphi$ is given by $\beta(X)=X$ and $\beta(Y)= -Y$.
Notice that $\beta$ is an involution which commutes with $\tilde f:=\varphi^{-1}\circ f\circ \varphi$, 
since $f$ is an $\alpha$-morphism.
This means that
$P:=\tilde f(X)$ and $Q:=\tilde f(Y)$ satisfy
$$
\beta(P)=\beta(\tilde f(X))=\tilde f(\beta (X))=P,\quad \beta(Q)=\beta(\tilde f(Y))=\tilde f(\beta (Y))=-Q
$$
and $\Jac(P,Q)=1$.
We will prove that for $P,Q\in K[X,Y]$ with $\beta(P)=P$, $\beta(Q)=-Q$ and $\Jac(P,Q)=1$ there exist
$\lambda\in K^{\times}$ and $g\in K[Y^2]$
such that $Q=\lambda Y$ and $P=\frac{X}{\lambda}+g(Y)$.

This implies that $\tilde f$ is invertible, with $\tilde f^{-1}(X)=
\lambda X-\lambda g\left(\frac Y{\lambda}\right)$ and
$\tilde f^{-1}(Y)=\frac Y{\lambda}$; hence $f$ is also invertible.

We adopt the notations and results of~\cite{GGV1}, in particular $[P,Q]:=\Jac(P,Q)$. Let $P$ and $Q$ be as
above.
It is clear that $P\in K[X,Y^2]$ and $Q\in Y K[X,Y^2]$. Then
$w_{0,1}(P)\ge 0$ and $w_{0,1}(Q)\ge 1$ and so, since by~\cite[Proposition 1.11]{GGV1} we have
$$
0=w_{0,1}([P,Q])\ge w_{0,1}(P) + w_{0,1}(Q) - (0+1),
$$
it follows that $w_{0,1}(P)= 0$, $w_{0,1}(Q)= 1$ and $0\ne
[\ell\ell_{0,1}(P),\ell\ell_{0,1}(Q)]=\ell\ell_{0,1}([P,Q])$,
by the same proposition. Write
$\ell\ell_{0,1}(P)=h_p(X)$ and $\ell\ell_{0,1}(Q)=Yh_q(X)$, then
$$
1=[\ell\ell_{0,1}(P),\ell\ell_{0,1}(Q)] = h_p'(X)h_q(X),
$$
 hence
$$
\ell\ell_{0,1}(Q)=\lambda Y\quad\text{and}\quad\ell\ell_{0,1}(P)=\frac 1{\lambda}X,
$$
for some $\lambda\in K^{\times}$.
We prove now that
\begin{equation}\label{principal term}
\ell_{1,0}(Q)=\lambda Y\quad\text{and}\quad\ell_{1,0}(P)=\frac 1{\lambda}X.
\end{equation}
Let $(\rho,\sigma):=\min\{\underline{\Succ}_{P}(0,1),\underline{\Succ}_{Q}(0,1)\}$. We first prove that

\begin{equation}\label{first condition}
(\rho,\sigma)=(-1,1),\quad \ell_{1,-1}(P)=\frac 1{\lambda} X\quad\text{and}\quad \ell_{1,-1}(Q)=\lambda Y.
\end{equation}
Note that by~\cite[Proposition 3.13]{GGV1} we have $\underline{\st}_{\rho,\sigma}(P)=(1,0)$ and
$\underline{\st}_{\rho,\sigma}(Q)=(0,1)$. Moreover, using the definition of the order on directions
of~\cite[Section 3]{GGV1},
the inequalities $(-1,1)\ge(\rho,\sigma)> (0,1)$, imply $\sigma\ge -\rho>0$. Hence
$$
v_{\rho,\sigma}(\underline{\en}_{\rho,\sigma}(P))=v_{\rho,\sigma}(\underline{\st}_{\rho,\sigma}(P))=v_{\rho,\sigma}(1,0)=\rho<0
$$
and
$$
v_{\rho,\sigma}(\underline{\en}_{\rho,\sigma}(Q))=v_{\rho,\sigma}(\underline{\st}_{\rho,\sigma}(Q))=v_{\rho,\sigma}(0,1)=\sigma>0,
$$
and so
$\underline{\en}_{\rho,\sigma}(P)\nsim\underline{\en}_{\rho,\sigma}(Q)$, since two non-zero points $A,B$ 
in the first quadrant are
aligned if and only if  $A=\gamma B$ for some $\gamma>0$.
But then, by~\cite[Proposition 2.3(2)]{GGV1} we have 
$[\ell\ell_{\rho,\sigma}(P),\ell\ell_{\rho,\sigma}(Q)]\ne 0$ and so,
by~\cite[Proposition 2.5(2)]{GGV1},
$$
\underline{\en}_{\rho,\sigma}(P)+\underline{\en}_{\rho,\sigma}(Q)-(1,1)=\underline{\en}_{\rho,\sigma}([P,Q])=(0,0).
$$
But the only non-aligned points in $\mathds{N}_0\times \mathds{N}_0$ which sum $(1,1)$, are $(1,0)$ and 
$(0,1)$, hence
the only possibility is
$$
\underline{\en}_{\rho,\sigma}(P)=\underline{\st}_{\rho,\sigma}(P)=(1,0)
$$
and
$$
\underline{\en}_{\rho,\sigma}(Q)=\underline{\st}_{\rho,\sigma}(Q)=(0,1).
$$
Since $(\rho,\sigma)\in \underline{\overline{\Dir}}(P)\cup \underline{\overline{\Dir}}(Q)$ it follows that
$(\rho,\sigma)=(-1,1)$, $\ell_{1,-1}(P)=\frac 1{\lambda} X$ and $\ell_{1,-1}(Q)=\lambda Y$, which
is~\eqref{first condition}.

We claim that
\begin{equation}\label{second condition}
(\rho_1,\sigma_1):=\min\{\Succ_{P}(1,-1),\Succ_{Q}(1,-1)\}\ge(1,0).
\end{equation}
In fact the geometric argument is the same as before:
Assume $(\rho_1,\sigma_1)<(1,0)$.
Note that by~\cite[Proposition 3.12]{GGV1} we have $\st_{\rho_1,\sigma_1}(P)=(1,0)$ and
$\st_{\rho_1,\sigma_1}(Q)=(0,1)$. Moreover, using the definition of the order on directions 
of~\cite[Section 3]{GGV1},
the inequalities $(1,-1)\le(\rho_1,\sigma_1)< (1,0)$, imply $\rho_1\ge -\sigma_1>0$. Hence
$$
v_{\rho_1,\sigma_1}(\en_{\rho_1,\sigma_1}(P))=v_{\rho_1,\sigma_1}(\st_{\rho_1,\sigma_1}(P))=v_{\rho_1,\sigma_1}(1,0)=\rho_1>0
$$
and
$$
v_{\rho_1,\sigma_1}(\en_{\rho_1,\sigma_1}(Q))=v_{\rho_1,\sigma_1}(\st_{\rho_1,\sigma_1}(Q))=v_{\rho_1,\sigma_1}(0,1)=\sigma_1<0,
$$
and so
$\en_{\rho_1,\sigma_1}(P)\nsim\en_{\rho_1,\sigma_1}(Q)$, since two non-zero points $A,B$ in the 
first quadrant are
aligned if and only if  $A=\gamma B$ for some $\gamma>0$.
But then, by~\cite[Proposition 2.3(1)]{GGV1} we have 
$[\ell_{\rho_1,\sigma_1}(P),\ell_{\rho_1,\sigma_1}(Q)]\ne 0$ and so,
by~\cite[Proposition 2.4(2)]{GGV1},
$$
\en_{\rho_1,\sigma_1}(P)+\en_{\rho_1,\sigma_1}(Q)-(1,1)=\en_{\rho_1,\sigma_1}([P,Q])=(0,0).
$$
But the only non-aligned points in $\mathds{N}_0\times \mathds{N}_0$ which sum $(1,1)$, 
are $(1,0)$ and $(0,1)$, hence
the only possibility is
$$
\en_{\rho_1,\sigma_1}(P)=\st_{\rho_1,\sigma_1}(P)=(1,0)
$$
and
$$
\en_{\rho_1,\sigma_1}(Q)=\st_{\rho_1,\sigma_1}(Q)=(0,1).
$$
But this contradicts the fact that $(\rho_1,\sigma_1)\in \Dir(P)\cup \Dir(Q)$ and 
proves~\eqref{second condition}.

Now from~\eqref{second condition} and~\cite[Proposition 3.12]{GGV1}  it follows that $\st_{1,0}(Q)=(0,1)$ and
$\st_{1,0}(P)=(1,0)$, hence
$$
v_{1,0}(Q)=v_{1,0}(\st_{1,0}(Q))=0\quad\text{and}\quad v_{1,0}(P)=v_{1,0}(\st_{1,0}(P))=1,
$$
which implies
$$
Q=g_Q(Y)\quad\text{and}\quad P=g(Y)+X g_P(Y),\quad\text{for some $g_Q(Y) , g(Y),g_P(Y)\in K[Y],$}
$$
with $g(Y)\in K[Y^2]$. But then
$$
1=[P,Q]=g_P(Y) g_Q'(Y),
$$
hence $g_Q(Y)=\lambda Y$ and $g_P(Y)=\frac 1{\lambda}$, which proves~\eqref{principal term} 
and concludes the proof.
\end{proof}
\begin{remark}
From Proposition~\ref{JC2}, it can easily be seen that each $\alpha$-morphism $f$ which satisfies
$\Jac(f(X),f(Y))=1$ is of the following form:
$$
f(X):= aX+ bY+ \sum_{j= 0}^n T_j\quad\text{and}\quad f(Y):= aY+ bX+ \sum_{j= 0}^n T_j
$$
where $n\in \mathds{N}_0$, $a,b\in K$, with $a^2 - b^2 = 1$, and $T_j:=c_{j}(X-Y)^{2j}$ for some
$c_0,\dots,c_n\in K$.

One can structure the proof of $\alpha-JC_2$ similar to the proof of $\alpha-D_1$, however the key 
difference is the following:
In the proof of $\alpha-D_1$ we obtain a contradiction using the second highest order term
via~\eqref{conmutator},
which is not present in $\alpha-JC_2$, if we
identify the commutator with the Jacobian bracket. On the other hand, in the proof of $\alpha-JC_2$ 
we make use of $\ell\ell_{\rho,\sigma}$,
which is not well-behaved in $A_1(K)$.
\end{remark}
\bibliographystyle{plain}

\end{document}